\title{Central limit theorems for non-linear functionals of Gaussian fields via Wiener chaos decomposition}
\author{Fabio Coppini\thanks{Mathematical Institute, Utrecht University, Budapestlaan 6, 3584 CD Utrecht, The Netherlands} \and Wioletta M. Ruszel\footnotemark[1]}
\date{}
\newtheorem{theorem}{Theorem}
\newtheorem{remark}{Remark}
\newtheorem{lemma}{Lemma}
\newtheorem{proposition}{Proposition}
\newcommand{\Z}{\mathbb{Z}}
\newcommand{\R}{\mathbb{R}}
\newcommand{\E}{\mathbb{E}}
\newcommand{\Cov}{\operatorname{Cov}}
\newcommand{\Var}{\operatorname{Var}}
\begin{document}

\maketitle

\begin{abstract}
    We review and present some known results for non-linear functionals of Gaussian variables in the context of discrete Gaussian fields defined on the $d$ dimensional lattice. Our main result is a Central Limit Theorem in the spirit of the classical Breuer-Major theorem, together with applications to the powers of the Gaussian Free Field. Notably, we show that even powers of the discrete Gaussian Free Field converge to the Gaussian white noise, while odd powers converge to a continuous Gaussian Free Field with explicit covariance. The proofs are based on the Wiener chaos decomposition and the fourth moment Theorem (Nualart-Peccati, 2005), and include a tightness result. Even if these tools are well-known in the literature, their application to Gaussian fields on the lattice appears to be new.
\end{abstract}

\section{Introduction}

\subsection{Motivation and main result}\label{subsec:intro}

Let $d\geq 2$, for $i\in \Z^d$ and $N=2, 3, \dots$, a natural number. Define $B_N(i) = \{j \in \Z^d : \vert i-j\vert \leq N/2\}$ a box of size $N$, centered at $i\in \Z^d$. Let $B_N$ be the shorthand notation for $B_N(o)$ with $o$ the origin in $\Z^d$. Furthermore, let $X = (X_j)_{j\in \Z^d}$ be a stationary discrete Gaussian Field on $\Z^d$ such that $\E[X_o]=0$ and $\E[X^2_o]=1$. For every $u\in \Z^d$, denote the covariance $\rho(u):=\E[X_o X_u]$.
Let $H:\R \to \R$ be a real valued function. For every $N$, define the field $\Phi_N$ as
\begin{equation*}
\Phi_N(j) := H(X_j), \quad j \in B_N.
\end{equation*}
Let $D=[-1/2, 1/2]^d$. The action of $\Phi_N$ on a test function $f:D \to \R$ is defined by
\begin{equation}
\label{def:langle Phi_N}
\langle \Phi_N, f \rangle := N^{-d} \sum_{j \in B_N} H(X_j) f\left(\frac{j}{N}\right).
\end{equation}
If $\Var[H(X_o)]<\infty$, then we have the following decomposition
\begin{equation*}
H(x) = h_0 + \sum_{q=m}^\infty c_q H_q(x), \quad c_i\in\R, \quad x\in \R,
\end{equation*}
into {\it Hermite polynomials} $(H_q)_{q\in \mathbb{N}}$ of order $q\geq m$, where $m\geq1$ is the \emph{Hermite rank} of $H$, i.e., the smallest $q \geq 1$ such that $c_q \neq 0$. Without loss of generality, we suppose that the field is centered, i.e., $\mathbb{E}[\langle \Phi_N, f \rangle]=0$ for every test function $f$ and $N\in\mathbb{N}$.

 Let $\{\varphi_k\}_{k=1}^\infty$ denote an orthonormal basis of $L^2(D)$ formed by $L^2$-normalized eigenfunctions of the Laplacian operator $-\Delta$ on $D$ with zero boundary condition, and let $\{\lambda_k\}_{k=1}^\infty$ be the corresponding eigenvalues. For any $\alpha>0$, the Sobolev space $\mathcal{H}^{-\alpha}(D) = (\mathcal{H}^{\alpha}(D))'$ is the dual of 
\begin{equation}
\label{def:hilbert-space}
  \mathcal{H}^\alpha(D)=\bigl\{ u=\sum_{k\ge1} u_k \varphi_k \text{ s.t. } \|(Id-\Delta)^{\alpha/2}u\|_{L^2}^2 := \sum_{k\ge1} (1+\lambda_k)^{\alpha} |u_k|^2 <\infty \bigr\}.
\end{equation}
The $d$-dimensional Gaussian {\it white noise} $W$ is the centered Gaussian random distribution defined for $f,g \in \mathcal{H}^{\alpha}(D)$,
\begin{equation*}
\mathbb{E}(\langle W, f\rangle \langle W,g\rangle) = \int_{D}f(x) g(x) dx.
\end{equation*}

We are interested in the conditions on $\rho$ and $H$ such that $\Phi_N$ satisfies a Central Limit Theorem (CLT), namely our main result is:

\begin{theorem}
\label{thm:main}
Let $(X_i)_{i \in \Z^d}$ be a centered stationary Gaussian field with covariance $\rho(u) = \E[X_o X_u]$ for $u\in \mathbb{Z}^d$. Let $H: \R\to \R$ be a function with Hermite expansion $H = \sum_{q=m}^\infty c_q H_q$ and Hermite rank $m \geq 1$. Suppose that
\begin{equation}
\label{hyp:susceptibility}
\sum_{u \in \Z^d} |\rho(u)|^m < \infty.
\end{equation}
Let $\Phi_N$ be the field defined in Equation\eqref{def:langle Phi_N}, then
\[
N^{d/2} \Phi_N \overset{d} \longrightarrow \sqrt{C_m} \,W 
\]
as $N\rightarrow \infty$ in $\mathcal{H}^{-\alpha}(D)$ for any $\alpha>d/2$ where $C_m=\sum_{q = m}^\infty q!{c_q^2} \sum_{u \in \Z^d} \rho(u)^q  \in [0, \infty)$.
\end{theorem}

CLT results for non-linear functionals of a stationary Gaussian vector are usually known as Breuer-Major theorems, see, e.g., \cite{breuer_central_1983, chambers_central_1989, nourdin_breuermajor_2021, nourdin_berry-esseen_2019, nualart_continuous_2020}, and involve the notion of Hermite rank, see., e.g., \cite{peccati_wiener_2011}. 
The distributional CLT setting presented here is slightly more general than the original result \cite{breuer_central_1983}: for instance, by taking $f\equiv 1$, then the limit of Equation \eqref{def:langle Phi_N} after centering and rescaling, i.e., the limit of
\begin{equation*}
N^{d/2} \left( \langle \Phi_N, 1 \rangle - \mathbb{E}\left[\langle \Phi_N, 1 \rangle\right]\right) = N^{-d/2} \sum_{j \in B_N} H(X_j) - \mathbb{E}[H(X_j)]
\end{equation*}
is known to be a Gaussian variable with explicit covariance, whenever the field covariance $\rho$ is such that $\sum_{u \in \Z^d} |\rho(u)|^m < \infty$.
More precisely, the statement of the classical Breuer-Major theorem is the following one.

\begin{theorem}[{\cite{breuer_central_1983}}]
\label{thm:breuer-major}
Let $(X_i)_{i \in \Z^d}$ be a centered stationary Gaussian field with unit variance and covariance $\rho(u) = \E[X_o X_u]$. Let $H: \R^d\to \R$ be a function with Hermite expansion $H = \sum_{q=m}^\infty c_q H_q$ and Hermite rank $m \geq 1$. Suppose that Equation \eqref{hyp:susceptibility} holds. Then, the normalized sums
\begin{equation}
Z^N_i = N^{-d/2} \sum_{j \in  B_N(i)} H(X_j), \quad i \in \Z^d
\end{equation}
converge in distribution to a centered Gaussian random variable with variance $C_m$ as defined in Theorem \ref{thm:main}.
Moreover, the finite dimensional distributions of the field $(Z^N_i)_{i\in \Z^d}$ tend, as $N\to\infty$, to the finite dimensional distributions of the field $\sigma Z^*_i$, where $(Z^*_i)_{i\in\Z^d}$ are independent standard normal random variables.
\end{theorem}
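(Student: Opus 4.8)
\emph{Proof strategy.} The plan is to combine the Wiener chaos decomposition with the Nualart--Peccati fourth moment theorem, supplemented by a truncation argument and a cross-covariance computation for the joint statement. First I realize the field through an isonormal Gaussian process $W$ on a real separable Hilbert space $\mathfrak{H}$, with $X_j = W(e_j)$ and $\langle e_j, e_k\rangle_{\mathfrak{H}} = \rho(j-k)$; since $\|e_j\|_{\mathfrak{H}}^2 = \rho(o) = 1$, one has $H_q(X_j) = I_q(e_j^{\otimes q})$, the $q$-th multiple Wiener--It\^o integral. By linearity I decompose the (centered) sum into its chaos components, $Z^N_i = \sum_{q=m}^\infty c_q\, Z^N_{i,q}$ with $Z^N_{i,q} = N^{-d/2}\sum_{j\in B(i,N)} H_q(X_j) = I_q(f^N_{i,q})$ and symmetric kernel $f^N_{i,q} = N^{-d/2}\sum_{j\in B(i,N)} e_j^{\otimes q}$. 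I handle a single fixed chaos $q$ first, then recombine via a truncation.

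\emph{Variance.} Using the orthogonality $\E[H_q(X_j)H_p(X_k)] = \delta_{qp}\, q!\,\rho(j-k)^q$, I compute $\Var(Z^N_{i,q}) = q!\, N^{-d}\sum_{j,k\in B(i,N)}\rho(j-k)^q$, which converges to $q!\sum_{u\in\Z^d}\rho(u)^q$ by a Ces\`aro/dominated-convergence argument: each difference $u=j-k$ is eventually counted with frequency tending to $1$, and the summand is dominated by $|\rho(u)|^q \le |\rho(u)|^m$ (as $|\rho|\le1$ and $q\ge m$), which is summable by hypothesis. Since distinct chaoses are orthogonal, the truncation $Z^{N,Q}_i := \sum_{q=m}^Q c_q Z^N_{i,q}$ satisfies $\Var(Z^{N,Q}_i)\to \sum_{q=m}^Q q!\,c_q^2\sum_u\rho(u)^q$, which increases to the finite limit $\sigma^2$ as $Q\to\infty$; finiteness of $\sigma^2$ uses the same bound together with $\sum_q q!\,c_q^2 = \Var(H(X_o))<\infty$.

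\emph{Fourth moment for a single chaos.} For fixed $q$ the Nualart--Peccati criterion reduces the convergence $Z^N_{i,q}\to N\!\left(0,\, q!\sum_u\rho(u)^q\right)$ to showing that every contraction norm $\|f^N_{i,q}\otimes_r f^N_{i,q}\|_{\mathfrak{H}^{\otimes 2(q-r)}}$ vanishes as $N\to\infty$, for $1\le r\le q-1$. Expanding the contraction yields the explicit four-fold lattice sum
\[
\|f^N_{i,q}\otimes_r f^N_{i,q}\|^2 = N^{-2d}\!\!\sum_{j,k,j',k'\in B(i,N)}\!\!\rho(j-k)^r\,\rho(j'-k')^r\,\rho(j-j')^{q-r}\,\rho(k-k')^{q-r},
\]
and I expect to bound its absolute value by a convolution/Young-type estimate using the interpolation $\rho\in\ell^q\subseteq\ell^r\cap\ell^{q-r}$ coming from $\rho\in\ell^m$ and $|\rho|\le1$. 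This contraction estimate is the \textbf{main obstacle} and the analytic heart of the argument; the rest is assembly.

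\emph{Joint convergence and truncation.} For the finite-dimensional statement I invoke the multivariate fourth moment theorem (Peccati--Tudor): componentwise convergence to Gaussians (the previous step, summed over the finitely many chaoses $m\le q\le Q$, whose inter-chaos covariances vanish identically) together with convergence of the covariance matrix gives joint convergence to the corresponding Gaussian vector. I then compute the cross-covariances $\Cov(Z^N_i, Z^N_{i'}) = N^{-d}\sum_{j\in B(i,N),\,k\in B(i',N)} r(j-k)$, where $r(u)=\sum_q q!\,c_q^2\,\rho(u)^q$ is the covariance of $H(X_o),H(X_u)$ and satisfies $\sum_u|r(u)|<\infty$. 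For distinct indices the two summation sets separate as $N\to\infty$, forcing $|j-k|\gtrsim N$, so the cross-covariance is controlled by the tail $\sum_{|u|\gtrsim N}|r(u)|\to0$; hence the limiting covariance is $\sigma^2\,\mathrm{Id}$, i.e.\ the limit field is $(\sigma Z^*_i)$ with i.i.d.\ standard Gaussians. Finally, to pass from the truncated field to the full one I use the uniform estimate $\limsup_N \Var(Z^N_i - Z^{N,Q}_i) \le \sum_{q>Q} q!\,c_q^2\sum_u\rho(u)^q \to 0$ as $Q\to\infty$, and conclude by the standard approximation lemma (Slutsky/Billingsley) that the truncated CLT propagates to $Z^N_i$ and to its finite-dimensional marginals.
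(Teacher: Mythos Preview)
Your overall strategy coincides with the paper's: the paper does not prove Theorem~\ref{thm:breuer-major} directly but obtains it from Theorem~\ref{thm:main}, whose proof (Proposition~\ref{pro:clt-vector-one-q}, Lemma~\ref{lem:clt-vector-different-q}, Lemma~\ref{lem:decomposition}) follows exactly your outline---chaos decomposition, fourth moment criterion on each fixed chaos via contraction norms, Peccati--Tudor for joint convergence across chaoses, and an $L^2$ truncation to pass from finitely many chaoses to the full expansion.

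There is, however, a genuine gap at the step you yourself flag as the main obstacle. Your proposed route to the contraction bound, ``$\rho\in\ell^q\subseteq\ell^r\cap\ell^{q-r}$'', is false: for sequence spaces the inclusion goes the other way, and in particular nothing in the hypotheses gives $\rho\in\ell^r$ when $r<m$ (which does occur, e.g.\ $q=m$, $r=1$). A naive Young/convolution bound on the four-fold sum would therefore produce an infinite quantity. The paper's Proposition~\ref{pro:clt-vector-one-q} shows what is actually needed: first apply the pointwise inequality $a^r b^{q-r}\le a^q+b^q$ to one pair of factors to force a full power $|\rho|^q$, then change variables so that only three free differences remain (one free summation contributing $|B_N|\sim N^d$), and finally split the extra $N^{-d}$ as $N^{-d(1-r/q)}\cdot N^{-d(1-(q-r)/q)}$ and use H\"older on each truncated sum $\sum_{|v|\le N}|\rho(v)|^r$ together with a $\delta$-scale splitting to show these vanish. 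The point is that one never needs $\rho\in\ell^r$; the saving comes from the normalisation.

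A second, smaller issue: in your cross-covariance step you assert that for $i\neq i'$ ``the two summation sets separate as $N\to\infty$, forcing $|j-k|\gtrsim N$''. With the definition $B(i,N)=\{j:|i-j|\le N/2\}$ for fixed lattice points $i,i'$, the boxes do not separate---they overlap on a set of volume $\sim N^d$. The paper sidesteps this by working in the distributional formulation and testing against indicator functions of \emph{disjoint} macroscopic boxes $Q_1,\dots,Q_k\subset D$ (so the discrete boxes $B_N^{(i)}=\{j:j/N\in Q_i\}$ are genuinely disjoint), and only afterwards identifies these with the $B_N(i)$. Your asymptotic-independence argument needs the same macroscopic separation; as written it does not go through.
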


In the last 20 years, several proofs and extensions of the above result have been proposed, see for instance, \cite{nourdin_steins_2009} and the lecture notes  \cite{nualart_malliavin_2019} with the references therein. Generalizations of Theorem \ref{thm:breuer-major} are now present in the field of c\`adl\`ag functions, \cite{arcones_limit_1994,hu_renormalized_2005, nourdin_functional_2020}, self-similar Gaussian processes \cite{campese_continuous_2020, nualart_continuous_2020}, together with precise quantitative bounds for the rate of convergence \cite{nourdin_breuermajor_2021}. The literature being too vast to be fully cited, we refer to Subsection \ref{ss:literature} for a general overview. The interest in Gaussian fields and corresponding fluctuations have consistently risen in the same period, see, e.g., \cite{peccati_gaussian_2005, newman_gaussian_2018, giacomin_equilibrium_2001, cipriani_properties_2023, wu_local_2025}. These two fields appear to have been developed rather independently, with few results using the Wiener chaos decomposition approach to study non-linear functionals of Gaussian fields. To the authors' knowledge, only two applications exist in this spirit, i.e., \cite{gass_spectral_2025, mcauley_limit_2025}, targeting local non-linear functionals of Gaussian fields resp. the Gaussian Free Field, such as nodal volumes or level-set clusters.

The aim of this work is twofold: a Breuer-Major theorem in the case of discrete Gaussian fields, which appears to be missing in the existing literature, and one application to the case where $X$ is the discrete Gaussian Free Field (GFF) and $H(x)=x^p$, showing that even powers  converge to the white noise, while odd powers do not, see Section \ref{ss:application} for more on this. For the first part, we do not look at convergence of $Z^N_i$, but we rather tackle the field $\Phi_N$, and show that under standard hypothesis it converges, as a distribution, to the Gaussian white noise. A major difference with the original result is that no combinatorial estimate is used. The original proof is built on top of the method of moments, i.e., the moments of a general vector $\sum_{j=1}^k b_j Z^N_{j}$, with $b_j \in \R$, are studied and computed using Feynman diagrams and the corresponding formula.
In the proof proposed here, we take advantage of the fourth moment Theorem, introduced in the celebrated paper \cite{nualart_central_2005}, where Nualart and Peccati characterize the Gaussianity of a Wiener-It\^o integral in terms of its fourth moment, or via a nullity condition on a sequence of carefully constructed Hilbert norms. We precisely rely on this last characterization and show that the finite dimensional distributions of the field $\Phi_N$ can be directly expressed as It\^o integrals with respect to an underlying Wiener process. Proving Breuer-Major type results with the fourth moment Theorem has been the standard approach in the field for over a decade, however their application to discrete lattice configuration appears to be new and mathematically interesting. Notably, our approach has been inspired by the lecture notes \cite[Theorem 3.1.1]{nualart_malliavin_2019} where a simplified version of Theorem \ref{thm:breuer-major} is proven. We refer to Proposition \ref{pro:clt-vector-one-q} for the main contribution. The proof of tightness given here, i.e., Lemma \ref{lem:tightness}, also appears to be missing in the existing literature.

\begin{remark}
    In principle one can obtain tightness in a more general class of distributional spaces, namely Besov spaces, as done in \cite{cipriani_properties_2023}. However, we refrain from giving a result in this generality as defining the Besov framework is rather technical.
\end{remark}

We conclude this section by observing that the statement of Theorem \ref{thm:main} generalises the original Breuer-Major result; it addresses the convergence of $\Phi_N$ as a field instead of the finite dimensional distributions of $(Z^N_i)_{i\in \Z^d}$. For instance, for any $k \geq 2$, consider $Q_1, \dots, Q_k \subset D$ disjoint boxes and 
\begin{equation}
B_N^{(i)} = \{j\in \Z^d: j/N \in Q_i \}, \quad i=1, \dots, k.
\end{equation}
Let $f^{(i)}$ be the function defined by
\begin{equation}
f^{(i)}(x) = |Q_i|^{-d/2} 1_{Q_i}(x), \quad x \in D.
\end{equation}
We have that
\begin{equation}
\sqrt{\vert Q_i \vert } \langle \Phi_N, f^{(i)}\rangle = N^{-d/2} \sum_{j\in B^{(i)}_N} H(X_j) = \bar{Z}^N_i.
\end{equation}
By suitably approximating the functions $f^{(i)}$ with a $L^2$ sequence, Theorem \ref{thm:main} yields that the vector $(\bar{Z}^N_1, \dots, \bar{Z}^N_k)$ converges to a vector of independent standard normal random variables. As the set $B_N^{(i)}$ is morally the same of $B_N(i)$ for some choice of $(Q_i)_{i=1, \dots, k}$, we can transport this result to the variables $(Z^i_N)_{i \in \Z^d}$ and obtain Theorem \ref{thm:breuer-major}.

\subsection{Related works}
\label{ss:literature}

Theorem \ref{thm:breuer-major} is a celebrated CLT result for Gaussian processes, see also \cite{giraitis_clt_1985, arcones_limit_1994}. The literature is vast and hard to fully cite, we review here the main results and generalizations. Important contributions concern quantitative bounds \cite{nourdin_breuermajor_2021, nourdin_berry-esseen_2019, Vidotto},  and application to time processes \cite{nualart_continuous_2020, nourdin_functional_2020, campese_continuous_2020}. Multidimensional version of Breuer-Major have been established for example in \cite{bardet_moment_2013}, see also the discussion and the references cited in \cite[\S 11]{peccati_wiener_2011}. The techniques used in these works concern the notion of Wiener chaos and apply tools like Stein’s method and Malliavin calculus, see for instance the seminal work \cite{nourdin_steins_2009} and references therein for more on this perspective.

Fields with finite susceptibility are known to converge to the white-noise provided stationarity and FKG-like inequalities, see the beautiful result of Newman \cite{newman_normal_1980} on this perspective. In the case of linear and non-linear functionals of Gaussian fields, we cite \cite{cipriani_properties_2023, coppini_wick_2025, gass_spectral_2025}. To some extent, grad-phi models can exhibit a behavior close to the one of the GFF and several CLT for non-linear functionals have also be studied in literature, e.g., \cite{newman_gaussian_2018, giacomin_equilibrium_2001, miller_fluctuations_2011, wu_local_2025}. When the susceptibility of the field is infinite, the Dobrushin-Major result \cite{dobrushin_non-central_1979} establishes a different limit (and normalization!) characterized by a multiple Wiener integral representation.

Looking at the fourth moment Theorem \cite{nualart_central_2005}, many generalizations have been proposed, see, e.g., \cite{azmoodeh_generalization_2016,naganuma_generalizations_2022,duker_fourth-moment_2025}, as well as applications to the study of local functionals of the GFF, but also to more general results in free probability and statistics \cite{bourguin_four_2019, dobler_fourth_2018, kemp_wigner_2012}. 

Finally, tightness has been thoroughly studied in \cite{nourdin_functional_2020, campese_continuous_2020} in the context of 1$d$ (discrete and continuous) time processes converging to the Brownian motion. In this context, a careful control on time increments via $L^p$ bounds with $p>2$ and Malliavin calculus is needed to obtain tightness in the Skorohod space $D([0,1])$. In our case, the limit being the white noise in a sufficiently negative Hilbert space, it is enough to directly bound the $L^2$ norm of the process via the spectral properties of the Laplacian and use the fundamental orthogonality of the Hermite expansion.

\subsection{Applications to the powers of the discrete Gaussian Free Field}
\label{ss:application}
Recently, there has been an increasing interest in studying powers of the GFF, see the works \cite{cipriani_properties_2023,coppini_wick_2025} and \cite{chiarini_fermionic_nodate} for the connection with Fermionic fields. We now present a useful application of Theorem \ref{thm:main}, when applied to even powers of the GFF and study the odd powers separately.

\subsubsection{Even powers}

In \cite{cipriani_properties_2023, coppini_wick_2025}, it is shown that the even powers of the gradient of the GFF converge to the white noise in an open set $U\subset \R^d$, with $d\geq 3$. Here, we observe that this is mainly a consequence of Theorem \ref{thm:main}. Indeed, the class of functions $H(x)=x^{2p}$ has Hermite rank $2$ for all $p=1,2,\dots$, and the discrete GFF (defined as in \cite[Definitions 5 and 6]{cipriani_properties_2023}) has covariance $\rho$ which behaves like $|u|^{-(d-2)}$.
Theorem \ref{thm:main} ensures that every even power of $(X_i)_{i\in\Z^d}$ converges to the Gaussian white noise as soon as $d\geq 5$, in fact
\begin{equation*}
\sum_{u\in\Z^d}\rho(u)^2 \lesssim
\sum_{u\in\Z^d}|u|^{4-2d} < \infty.
\end{equation*}
A similar argument goes through when the gradient of the field is considered, in this case $\rho(u)$ behaves like $|u|^{-d}$ and Theorem \ref{thm:main} holds for every $d\geq 3$.

\begin{remark}
The work \cite{cipriani_properties_2023} addresses a non-stationary GFF as they consider it defined on $U\subset\R^d$, with $U$ generic open set.  As discussed in the original paper \cite[Equation (1.4b)]{breuer_central_1983}, the stationarity assumption can be dropped in favor of the uniform condition: \newline $\sup_m \sum_n |\mathbb{E}[X_m X_{m+n}]|^k< C<\infty$. In the case of the GFF, this condition is known to hold. To the authors’ knowledge, the only other work addressing a CLT for a field defined on a generic domain is \cite{miller_fluctuations_2011}. It is also worth mentioning \cite{bardet_moment_2013} which extends the Breuer-Major theorem to non-stationary triangular arrays of Gaussian vectors.
\end{remark}

\subsubsection{Odd powers}

The function $H(x) = x^{2p+1}$ has Hermite rank $1$ for every $p\geq 1$, i.e., the first order term $c_1$ in the Hermite expansion does not cancel out. For instance, $x^3 = 3H_1(x) + H_3(x)$ and similarly for $x^{2p+1}$ for $p>1$. Let $G$ be the Green's function of the discrete Laplacian on $\Z^d$ for $d\geq 3$, solving $(-\Delta) G = \delta_o$. The hypothesis of the Breuer-Major theorem are not satisfied, in fact
\begin{equation*}
\sum_{j \in \Z^d} |\Cov(X_o, X_j)| = \sum_{j \in \Z^d} |G(o,j)| = \infty.
\end{equation*}
In this case the limit is a continuous Gaussian Free Field. In the sequel $G_\textup{cont}$ refers to the continuous Green function on $D$, defined via the spectral decomposition $G_{\textup{cont}}(x,y) = \sum_{k=1}^\infty \lambda_k^{-1} \varphi_k(x) \varphi_k(y)$, where $\{\varphi_k\}_{k=1}^\infty$ and $\{\lambda_k\}_{k=1}^\infty$ are the eigenfunctions and eigenvalues of the Dirichlet Laplacian on $D$ introduced in Equation \eqref{def:hilbert-space}.

\begin{theorem}
    Let $X = (X_i)_{i\in \Z^d}$ be a Gaussian Free Field in $\Z^d$ for $d\geq 3$. Let $H(x):=x^{2p+1}$ for $p\geq 0$. Let $\Phi_N$ be the field defined in Equation \eqref{def:langle Phi_N}. Then, for any $\alpha>d/2 -1$ the centered field $\frac{N^d/2-1}{\sqrt{C_p}}\Phi_N$ converges in law in $\mathcal{H}^{-\alpha}(D)$ to a continuous Gaussian Free Field $\Phi$ with covariance
    \begin{equation*}
    \mathbb{E}[\langle \Phi, f\rangle\langle\Phi,g\rangle]= \int_{[-1/2, 1/2]^{2d}} f(x)g(y)G_{\textup{cont}}(x,y) dx dy
    \end{equation*}
    where $C_p=G(o,o)^{p+1} (2p+1)!!$ is the first order term $c_1$ in the decomposition of $H = \sum_j c_j H_j$.
\end{theorem}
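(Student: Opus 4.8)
\emph{Proof strategy.} The plan is to read off the Wiener chaos decomposition of $\Phi_N$ and to treat the chaoses one at a time. Since $H(x)=x^{2p+1}$ has Hermite rank $1$ and the covariance $\rho=G$ is \emph{not} summable, the first chaos is the dominant one; moreover it is exactly a linear functional of the Gaussian field, hence Gaussian, so that — unlike in Theorem~\ref{thm:main} — no appeal to the fourth moment theorem is needed. Concretely, writing $\sigma^2=G(o,o)=\E[X_o^2]$ and expanding $x^{2p+1}$ into the orthogonal polynomials $\{P_q\}_{q\ge0}$ of $\mathcal N(0,\sigma^2)$, normalised so that $\E[P_q(X_i)P_r(X_j)]=\delta_{qr}\,q!\,G(i,j)^q$ (in particular $P_1(x)=x$), parity leaves only odd indices $q\le 2p+1$ and
\[
x^{2p+1}=C_p\,x+\sum_{q\in\{3,5,\dots,2p+1\}} b_q\,P_q(x),\qquad C_p=\frac{\E[X_o^{2p+2}]}{\E[X_o^2]}=(2p+1)!!\,G(o,o)^p .
\]
Hence $\Phi_N=\Phi_N^{(1)}+\sum_q\Phi_N^{(q)}$ with $\langle\Phi_N^{(1)},f\rangle=C_p\,N^{-d}\sum_{j\in B_N}X_j\,f(j/N)$ and $\langle\Phi_N^{(q)},f\rangle=b_q\,N^{-d}\sum_{j\in B_N}P_q(X_j)\,f(j/N)$; distinct chaoses being $L^2$-orthogonal, I would handle the two pieces separately.

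\emph{The first chaos.} For every test function $f$, $\langle\Phi_N^{(1)},f\rangle$ is a linear combination of the centered Gaussians $(X_j)$, so the whole family $\{\langle\Phi_N^{(1)},\varphi_k\rangle\}_{k,N}$ is jointly Gaussian and it suffices to pass to the limit in the covariances. Starting from
\[
\E\bigl[\langle\Phi_N^{(1)},f\rangle\langle\Phi_N^{(1)},g\rangle\bigr]=C_p^2\,N^{-2d}\sum_{i,j\in B_N}G(i,j)\,f\!\left(\tfrac{i}{N}\right)g\!\left(\tfrac{j}{N}\right),
\]
I would insert the discrete Green's function asymptotics $G(i,j)=a_d|i-j|^{2-d}(1+o(1))$ as $|i-j|\to\infty$ (with $a_d$ the continuum constant and $G_{\mathrm{cont}}(x,y)=a_d|x-y|^{2-d}$), split off the near-diagonal part $|i-j|\le K$ — which contributes $O(N^d)$ for each fixed $K$ and is thus of lower order — and recognise what remains as a Riemann sum. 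The rescaling $\kappa_N$ that makes this converge has $\kappa_N^2\asymp N^{d-2}$ (this is the normalisation appearing in the statement), and then, letting $N\to\infty$ and then $K\to\infty$,
\[
\kappa_N^2\,\E\bigl[\langle\Phi_N^{(1)},f\rangle\langle\Phi_N^{(1)},g\rangle\bigr]\ \longrightarrow\ \int_D\!\int_D f(x)g(y)\,G_{\mathrm{cont}}(x,y)\,dx\,dy ,
\]
the integral being finite because the diagonal singularity $|x-y|^{2-d}$ is of order $d-2<d$ on $D\times D$. Therefore $\kappa_N\langle\Phi_N^{(1)},\cdot\rangle$ converges in finite-dimensional distributions to the centered Gaussian field $\Phi$ with the stated covariance; that this covariance defines an element of $\mathcal H^{-\alpha}(D)$ for $\alpha>d/2$ follows from the same bound used for tightness below.

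\emph{Higher chaoses, tightness and conclusion.} For each odd $q\in\{3,\dots,2p+1\}$, using $\E[P_q(X_i)P_q(X_j)]=q!\,G(i,j)^q$ and $G>0$,
\[
\Var\bigl(\langle\Phi_N^{(q)},f\rangle\bigr)\ \lesssim\ N^{-2d}\sum_{i,j\in B_N}|G(i,j)|^q\ \lesssim\ N^{-d}\sum_{|u|\le cN}|G(u)|^q\ \lesssim\ N^{-d}\log N ,
\]
since $(d-2)q\ge d$ for every $d\ge3$ and $q\ge3$ (with equality only for $d=q=3$, whence the logarithm). With $\kappa_N^2\asymp N^{d-2}$ this yields $\kappa_N^2\,\Var(\langle\Phi_N^{(q)},f\rangle)\lesssim N^{-2}\log N\to0$, so $\kappa_N\langle\Phi_N^{(q)},f\rangle\to0$ in $L^2$; by Slutsky, $\kappa_N\langle\Phi_N,\cdot\rangle$ has, in the limit, the finite-dimensional distributions of $\langle\Phi,\cdot\rangle$. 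For tightness in $\mathcal H^{-\alpha}(D)$ with $\alpha>d/2$ I would invoke Lemma~\ref{lem:tightness}: fixing $d/2<\alpha'<\alpha$, it is enough that $\sup_N\sum_k(1+\lambda_k)^{-\alpha'}\,\E[\kappa_N^2\langle\Phi_N,\varphi_k\rangle^2]<\infty$; the expectation is bounded uniformly in $N$ and $k$ by the two displays above together with the boundedness on $L^2(D)$ of the integral operator with kernel $|x-y|^{2-d}$ (Hardy--Littlewood--Sobolev, or a Schur test), which gives $\E[\langle\Phi,\varphi_k\rangle^2]\lesssim\|\varphi_k\|_{L^2}^2=1$, while $\sum_k(1+\lambda_k)^{-\alpha'}<\infty$ by Weyl's law $\lambda_k\asymp k^{2/d}$. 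Finite-dimensional convergence together with tightness then yields $\kappa_N\Phi_N\to\Phi$ in law in $\mathcal H^{-\alpha}(D)$, as claimed.

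\emph{Expected main obstacle.} The delicate point is the Riemann-sum step for the first chaos: one must quantify the discrepancy between the discrete and the continuum Green's functions near the diagonal while integrating against the pointwise-singular but locally integrable kernel $|x-y|^{2-d}$, and, en route, pin down the precise normalising sequence $\kappa_N$ and its constant — this is the only place where the exact power of $N$ and the value of $C_p$ genuinely enter. The remaining ingredients are soft: Gaussianity and linearity for the first chaos, an elementary second-moment estimate for the higher chaoses, and the Hilbert-space tightness argument already packaged in Lemma~\ref{lem:tightness}.
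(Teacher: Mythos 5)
Your proposal is correct and follows essentially the same route as the paper's sketch: split off the first chaos $c_1X_i$ (which is exactly Gaussian, so no fourth-moment argument is needed), pass to the limit in its covariance via the Green's function asymptotics and a Riemann sum, show the higher odd chaoses are negligible, and get tightness from the second-moment bound of Lemma \ref{lem:tightness}. Two points of comparison are worth recording. First, you dispose of the remainder by a direct variance estimate, $\kappa_N^2\Var(\langle\Phi_N^{(q)},f\rangle)\lesssim N^{-2}\log N\to0$, whereas the paper argues that the remainder satisfies a white-noise CLT (via Proposition \ref{pro:clt-vector-one-q}); your route is simpler and, as you note, it correctly isolates the borderline case $d=q=3$ where $\sum_u|G(u)|^q$ diverges logarithmically and the white-noise statement for the remainder would itself need care. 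Second, and more importantly, the normalisation you derive, $\kappa_N\asymp N^{(d-2)/2}/C_p$, is \emph{not} the $N^{d/2}/\sqrt{C_p}$ appearing in the statement (despite your parenthetical claim that it is): with the statement's normalisation the first-chaos covariance is $c_1^2N^{-d}\sum_{i,j}G(i,j)f(i/N)g(j/N)\asymp N^2\int\!\!\int fg\,G_{\textup{cont}}$, which diverges, so the factor must be $N^{(d-2)/2}$ and the prefactor $C_p^{-1}$ rather than $C_p^{-1/2}$ to produce the stated limit covariance $\int\!\!\int fg\,G_{\textup{cont}}$. Your computation is the internally consistent one; the discrepancy points to a typo in the theorem's statement (and in step (1) of the paper's own sketch) rather than to a gap in your argument.
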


\begin{proof}
    We give a sketch of the proof as most of the tools are standard or used in a more general fashion for proving Theorem \ref{thm:main}. As $H$ is an odd polynomial, we can write
    \begin{equation*}
        H(X_i) = X^{2p+1}_i = c_1 X_i + R_i
    \end{equation*}
    where $c_1 = \E[X^{2p+1}_o X_o] = \E[X^{2p+2}_o] = G(o,o)^{p+1} (2p+1)!! > 0$ and $R_i$ is a finite sum of Hermite polynomials of degree greater or equal than 3. Compute
    \begin{equation*}
        \Var \left[ N^{-d/2+1} \sum_{i\in B_N} f(i/N)X^{2p+1}_i \right] = c^2_1 N^{-(d-2)} \sum_{i,j \in B_N} f(i/N)f(j/N) G(i,j) + N^{-d} \sum_{m\geq 2} \sigma_{m, N},
    \end{equation*}
    where $\sigma_{m,N}$ is the variance contributed by the $(2m+1)$-th projection. We now claim that as $N$ to infinity, (1) the first term converges to $c^2_1 \int \int f(x)f(y) G_{\textup{cont}}(x,y) dx dy$ while (2) the second term converges to zero. (1) is a standard Riemann sum approximation, similar to what is obtained in the first part of Proposition \ref{pro:clt-vector-one-q}. The proof of (2) is more involved, but follows the main ideas of the proof of Proposition \ref{pro:clt-vector-one-q}, i.e., equivalently said $H_q(X)$ converges to the Gaussian white noise for any $q=2m+1$ with $m\geq 1$ integer. Split $\Phi_N = L_N + R_N$, where for $f$ test function $L_N$ and $R_N$ are defined by
    \begin{equation*}
        \langle L_N, f\rangle = S_{N,1}(f), \qquad \langle R_N, f\rangle = \sum_{q\geq 3} S_{N,q}(f),
    \end{equation*}
    with $S_{N,q}(f) = N^{-d/2-1} c_q \sum_{j\in B_N} H_q(X_j) f(j/N)$ for every $q\geq 1$, see Equation \eqref{def:S_{N,q}} for more on this decomposition.
    On the one hand, the field $R_N$ is tight as shown in Lemma \ref{lem:tightness}; on the other hand, $L_N$ only involves Gaussian variables.  Notably, let $\mathcal{G}_N$ be the operator on the lattice corresponding to the scaled kernel $N^{d-2}G(i,j)$, then we have that $\mathbb{E}|\langle L_N, \varphi_k\rangle|^2=c^2_1 N^{-2d}\sum_{i,j\in B_N}\varphi_k(i/N)\varphi(j/N) \mathcal{G}_N(i,j)$. 
    As $N$ goes to infinity, $\mathbb{E}|\langle L_N, \varphi_k\rangle|^2$ is thus close to $ c^2_1 \langle \varphi_k, G_\textup{cont} \varphi_k \rangle_{L^2(D)}$, with $G_\textup{cont}$ the continuous Green function on $D$, and we can bound the continuous inner product by $C\lambda_k^{-1}$ for some constant $C>0$.
    In particular,
    \begin{equation*}
        \mathbb{E}\left[ \Vert L_N \Vert^2_{\mathcal{H}^{-\alpha}(D)}\right] = \sum_{k\in\Z^d} (1+\lambda_k)^{-\alpha} \mathbb{E}\left[\langle L_N, \varphi_k\rangle^2\right] \leq C\sum_{k\in\Z^d} \lambda_k^{-\alpha-1} 
    \end{equation*}
    which is finite for every $\alpha > d/2 -1$ because of Weyl's law for the Dirichlet Laplacian on a bounded domain, which imples that $\lambda_k \sim k^{2/d}$ for large $k$. The proof is concluded given the properties of the orthogonal decomposition $\Phi_N = L_N+R_N$.
\end{proof}

\section{Preliminaries}

\subsection{Isonormal Wiener processes representation}

Let $\mathcal{H}$ be the Hilbert space constructed as the closure of finite linear combinations of $(u_j)_{j\in\Z^d}$, with inner product defined by
\begin{equation*}
    \langle u_j, u_k \rangle = \rho(j-k), \qquad j,k\in\Z^d.
\end{equation*}
The existence (and not uniqueness) of such a class of Hilbert spaces is classical in the literature, see, e.g., \cite{nualart_central_2005}. Let $W$ be the isonormal Gaussian process on $\mathcal{H}$, i.e., the family $W = (W(u))_{u \in \mathcal{H}}$ of centered Gaussian variables indexed by the elements of $\mathcal{H}$ and defined on some probability space $\left(\Omega, \mathcal{F}, \mathbb{P}\right)$. The process $W$ is such that, for every $u,v\in \mathcal{H}$, $\mathbb{E}\left[W(u)W(v)\right] = \langle u, v \rangle$. In particular, for every $j\in \Z^d$, we have the representation $X_j = W(u_j)$ and it holds that
\begin{equation*}
    \mathbb{E}\left[X_j X_k\right] = \mathbb{E}\left[W(u_j)W(u_k)\right] = \langle u_j, u_k \rangle = \rho(j-k), \qquad j\in\Z^d.
\end{equation*}
Fix $q\geq 2$ and let $u_j^{\otimes q}$ be the algebraic tensor product of $u_j$ and $u_j^{\odot q}$ its symmetric tensor product which respectively live in $\mathcal{H}^{\otimes q}$ and $\mathcal{H}^{\odot q}$. We denote by $I_q: \mathcal{H}^{\odot q} \to L^2(\Omega)$ the isometry between $\mathcal{H}^{\odot q}$ equipped with norm $\sqrt{q}\Vert \cdot \Vert_{\mathcal{H}^{\otimes q}}$ and the $q$-th Wiener chaos of $W$, i.e., $I_q$ is such that
\begin{equation}
\begin{split}
    &I_q(u_j^{\otimes q}) = H_q(X_j) \hspace{5cm}\qquad j \in \Z^d,\\
    & \mathbb{E}\left[I_q(u_j^{\otimes q}) I_q(u_k^{\otimes q})\right] = q! \langle u_j^{\otimes q} u_k^{\otimes q} \rangle = q! \rho(j-k)^k, \qquad j,k\in \Z^d,
\end{split}
\end{equation}
\label{def:I_q}
where the first two equalities are by definition and the last one is a simple computation of $\langle u_j^{\otimes q}, u_k^{\otimes q} \rangle$.

\subsection{Central limit theorem for the $q$-th components}
\label{ss:clt-q}

We start this section by recalling the seminal result of Nualart and Peccati.

\begin{theorem}[{\cite{nualart_central_2005}}]
\label{thm:fourth-moment-criterion}
    Fix $q\geq 2$. For any sequence of elements $\{f_N:N\geq1\}$ such that $f_N\in\mathcal{H}^{\odot q}$ for every $N$, and
    \begin{equation*}
        \lim_{N\to\infty} q! \Vert f_N \Vert^2_{\mathcal{H}^{\otimes q}} = \lim_{N\to\infty} \mathbb{E} \left[I_q(f_N)^2\right]=1,
    \end{equation*}
    the following conditions are equivalent:
    \begin{enumerate}
        \item[(i)] $\lim_{N\to\infty} \mathbb{E}\left[I_q(f_N)^4\right]=3$;
        \item[(ii)] for every $p=1, 2, \dots, q-1$, $\lim_{N\to\infty} \Vert f^{\otimes p}_N \Vert^2_{\mathcal{H}^{2(q-p)}} = 0$;
        \item[(iii)] as $N$ tends to infinity, the sequence $\{ I_q(f_N): N\geq 1 \}$ converges in distribution to a standard Gaussian random variable.
    \end{enumerate}
\end{theorem}

For every $q\geq 1$ and $f\in \mathcal{H}^{\alpha}(D)$, let $s_{N,q}(f)$ be the element of $\mathcal{H}^{\otimes q}$ defined by
\begin{equation}
    \label{def:s_{N,q}}
    s_{N,q}(f) = N^{-d/2} c_q \sum_{j\in B_N} u_j^{\otimes q} f(j/N).
\end{equation}
Recall that $H_q$ is the $q$-th Hermite polynomial and that $I_q(u_j^{\otimes q}) = H_q(X_j)$ for every $j\in \Z^d$. As $I_q$ is a linear functional, we can define $S_{N,q}(f)=I_q(s_{N,q})$, i.e., 
\begin{equation}
    \label{def:S_{N,q}}
    S_{N,q}(f) = I_q(s_{N,q}) = N^{-d/2} c_q \sum_{j\in B_N} H_q(X_j) f(j/N).
\end{equation}
Thanks to the fact that $H = \sum_q H_q$, one can (formally) write $\langle \Phi_N, f \rangle = \sum_q S_{N,q}(f)$.
In order to show that this equality is true for random variables (see Lemma \ref{lem:decomposition}), we start by giving a preliminary CLT for the $q$-th component $S_{N,q}(f)$.

\begin{proposition}
\label{pro:clt-vector-one-q}
    Assume hypothesis \eqref{hyp:susceptibility} and let $f\in \mathcal{H}^{\alpha}(D)$. For every $q\geq m$, it holds that
    \begin{equation*}
        \Var(S_{N,q}(f)) = q! c^2_q N^{-d} \sum_{u\in \Z^d} \rho(u)^q \sum_{\substack{j,k\in B_N\\j-k=u}} f\left(\tfrac jN \right) f\left(\tfrac kN \right).
    \end{equation*}
    As $N$ tends to infinity, it holds that
    \begin{equation*}
        \langle S_{N,q}, f \rangle \longrightarrow \mathcal{N}(0,  \sigma^2_q\int_{D}f^2(x)dx),
    \end{equation*}
    where
    \begin{equation}
    \label{def:sigma^2_q}
        \sigma^2_q = q! c^2_q \Big(\sum_{u\in \Z^d} \rho(u)^q \Big) < \infty.
    \end{equation}
\end{proposition}
\begin{proof}
From the definition of $I_q$, recall Equation \eqref{def:I_q}, we already know that $\E\left[ H_q(X_j)H_q(X_k)\right] = q! \, \rho(j-k)^q$.
A straightforward computation of the variance yields
\begin{equation*}
    \Var(S_{N,q}) = c^2_q N^{-d} \sum_{j,k\in B_N} q!  \rho(j-k)^q f\left(\tfrac jN \right) f\left(\tfrac kN \right).
\end{equation*}
The final expression for $\Var(S_{N,q})$ is obtained with the change of variable $u=j-k$. Observe that, for $f\equiv 1$ and assuming \eqref{hyp:susceptibility}, the last expression implies that $\sum_{j,k\in B_N} \rho(j-k)^q > 0$ for every $N>0$ large enough. We now claim that
\begin{equation*}
    \lim_{N\to\infty} \Var(S_{N,q}) = q! c^2_q \left(\sum_{u\in \Z^d} \rho(u)^q \right) \cdot \int_{D}f^2(x)dx.
\end{equation*}
This is a consequence of the fact that the sum $\sum_{\substack{j,k\in B_N\\j-k=u}} f\left(\tfrac jN \right) f\left(\tfrac kN \right)$ can be rewritten as
\[
\sum_{k\in B_N\cap(B_N-u)} f\left(\tfrac {k+u}N \right) f\left(\tfrac kN \right).
\]
Since $f$ is bounded and continuous and, for each fixed $u$, $u/N\to0$ as $N$ tends to infinity: we recognise the Riemann sum
\begin{equation*}
    \lim_{N\to \infty} N^{-d}\sum_{k\in B_N\cap(B_N-u)} f\left(\tfrac {k+u}N \right) f\left(\tfrac kN \right) = \int_{D} f^2(x) dx.
\end{equation*}
The claim is proved.

In order to prove the CLT for $S_{N,q}$ and thus conclude the proof, we use the $(ii)$ characterization of Theorem \ref{thm:fourth-moment-criterion} with $f_N = s_{N,q}$. We have to prove that, for every $1\leq r\leq q-1$, it holds that
\begin{equation*}
    \lim_{N\to \infty} \Vert s_{N,q} \otimes_r s_{N,q}\Vert_{\mathcal{H}^{\otimes (2q-2r)}}=0.
\end{equation*}
Fix $1\le r\le q-1$. Using definition \eqref{def:s_{N,q}} and bilinearity of the contraction, a standard computation gives
\begin{equation}
\label{eq:contraction-explicit}
\begin{split}
    s_{N,q}\otimes_r s_{N,q} = c_q^2 N^{-d} \sum_{j_1,j_2\in B_N} f\left(\tfrac{j_1}{N}\right)f\left(\tfrac{j_2}{N}\right)
\rho(j_1-j_2)^r h_{j_1}^{\otimes(q-r)}\otimes h_{j_2}^{\otimes(q-r)}.
\end{split}
\end{equation}
Hence the squared norm in $\mathcal H^{\otimes(2q-2r)}$ is
\begin{equation}
\label{eq:contraction-norm-sq}
\begin{split}
&\big|s_{N,q}\otimes_r s_{N,q}\big|^2 
 \Big( \prod_{\ell=1}^4 f\left(\tfrac{j_\ell}{N}\right)\Big)
\rho(j_1-j_2)^r \rho(j_3-j_4)^r \rho(j_1-j_3)^{q-r}\rho(j_2-j_4)^{q-r}\\
& \leq C N^{-2d} \sum_{j_1,j_2,j_3,j_4\in B_N}
\rho(j_1-j_2)^r \rho(j_3-j_4)^r
\rho(j_1-j_3)^{q-r} \rho(j_2-j_4)^{q-r}
\end{split}
\end{equation}
for some constant $C=C(q,c_q, \sup_{x\in[-1/2,1/2]^d}|f(x)|)$, recall that $f$ is bounded. Apply the elementary inequality (valid for nonnegative $a,b$ and integers $0\leq r\leq q$), $a^rb^{q-r} \leq a^q + b^q$ to the pair $a=\rho(j_1-j_2)$, $b=\rho(j_1-j_3)$. Then
\begin{equation*}
    \rho(j_1-j_2)^r 
\rho(j_1-j_3)^{q-r} \leq \rho(j_1-j_2)^q +
\rho(j_1-j_3)^q.
\end{equation*}
Multiplying by the remaining factors $\rho(j_3-j_4)^r 
\rho(j_2-j_4)^{q-r}$ and summing, produce the bound
\begin{equation*}
\begin{split}
    \big|s_{N,q}\otimes_r s_{N,q}\big|^2 &\leq C N^{-2d} \sum_{j_1, j_2, j_3, j_4\in  B_N} \left[\rho(j_1 -j_2)^q + \rho(j_1-j_3)^q\right]\rho(j_3-j_4)^r \rho(j_2-j_4)^{q-r}\\
    & \leq 2CN^{-2d} \sum_{j_1, j_2, j_3, j_4\in  B_N} \rho(j_1 -j_2)^q\rho(j_3-j_4)^r \rho(j_2-j_4)^{q-r},
\end{split}
\end{equation*}
where the second inequality comes from symmetric considerations. Fix the variables
\begin{equation*}
    u:=j_1-j_2, \quad v:=j_3-j_4, \quad w:=j_2-j_4.
\end{equation*}
For a given $(u,v,w)$ and a choice $j_4\in B_N$, the quadruple $(j_1, j_2, j_3, j_4)$ is determined. Hence, for any fixed $(u,v,w)$, the number of quadruples $(j_1, j_2, j_3, j_4)$ is at most $|B_N|$. Therefore, we can write
\begin{equation*}
\begin{split}
    \big|s_{N,q}\otimes_r s_{N,q}\big|^2 &\leq C N^{-d}  \sum_{u \in  \Z^d} \rho(u)^q \Big(\sum_{\substack{v\in \Z^d\\ |v|\leq N}}\rho(v)^r \Big)\Big(\sum_{\substack{w\in \Z^d\\ |w|\leq N}}\rho(w)^{q-r} \Big)\\
    &= C \Big(\sum_{u \in  \Z^d} \rho(u)^q \Big)\Big(N^{-d(1-\frac rq)}\sum_{\substack{v\in \Z^d\\ |v|\leq N}}\rho(v)^r \Big)\Big(N^{-d(1-\frac {q-r}q)}\sum_{\substack{w\in \Z^d\\ |w|\leq N}}\rho(w)^{q-r} \Big).
\end{split}
\end{equation*}
Focus on the term with $\rho(v)^r$ (the term with $q-r$ is equivalent). It remains to show that for $r=1, \dots, q-1$, we have that
\begin{equation*}
    \lim_{N\to\infty}N^{-d(1-\frac rq)}\sum_{\substack{v\in \Z^d\\ |v|\leq N}}\rho(v)^r =0.
\end{equation*}
Fix $\delta>0$ and split the sum over the sets $\{0\leq |v|\leq \delta N\}$ and $\{\delta N < |v| \leq N \}$. For the first contribution, we apply H\"older's inequality and obtain that
\begin{equation*}
    N^{-d(1-\frac rq)}\sum_{\substack{v\in \Z^d\\ |v|\leq \delta N}}\rho(v)^r \leq C \delta^{d(1-\frac rq)} \Big(\sum_{v\in \Z^d}\rho(v)^q\Big)^{r/q} \leq C' \delta^{d(1-\frac rq)},
\end{equation*}
which goes to zero as $\delta$ tends to zero. Similarly, for the second term we obtain
\begin{equation*}
    N^{-d(1-\frac rq)}\sum_{\substack{v\in \Z^d\\ \delta N \leq |v|\leq  N}}\rho(v)^r \leq C \Bigg(\sum_{\substack{v\in \Z^d\\ \delta N \leq |v|\leq  N}}\rho(v)^q\Bigg)^{r/q}
\end{equation*}
which goes to zero as $N$ tends to infinity because of the summability properties of $\rho$. The proof is concluded.
\end{proof}

It is straightforward to leverage the previous CLT to the convergence of Gaussian vectors of following type.
\begin{lemma}
\label{lem:clt-vector-different-q}
    For every $q\geq 1$, the vector $\left(S_{N,1}(f), S_{N,2}(f)\dots, S_{N,q}(f)\right)$ converges in law to the vector $\left(S_1, \dots, S_q\right)$, where $\{S_q, q\geq 1\}$ are independent centered Gaussian random variables with variance $\{\sigma^2_q, q\geq 1\}$  with $\sigma^2_q$ defined in Equation \eqref{def:sigma^2_q}.
\end{lemma}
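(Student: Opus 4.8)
The plan is to combine Proposition~\ref{pro:clt-vector-one-q} with the fact that distinct Wiener chaoses are orthogonal and that joint convergence of chaos vectors follows from componentwise convergence plus convergence of the covariance matrix. Concretely, I would first observe that for $p\neq q$ the random variables $S_{N,p}(f)=I_p(s_{N,p})$ and $S_{N,q}(f)=I_q(s_{N,q})$ live in different Wiener chaoses, hence $\E[S_{N,p}(f)S_{N,q}(f)]=0$ for \emph{every} $N$, not merely in the limit. Therefore the covariance matrix of the vector $(S_{N,1}(f),\dots,S_{N,q}(f))$ is diagonal, and by Proposition~\ref{pro:clt-vector-one-q} its $j$-th diagonal entry $\Var(S_{N,j}(f))$ converges to $\sigma_j^2$. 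So the covariance matrices converge to $\mathrm{diag}(\sigma_1^2,\dots,\sigma_q^2)$.

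Next I would invoke the standard multivariate fourth-moment / chaos CLT (see \cite{nourdin_steins_2009}, or \cite[Theorem 6.2.3]{peccati_wiener_2011}): if each coordinate sequence lives in a fixed Wiener chaos, converges in law to a Gaussian, and the covariance matrices converge to some limit $\Sigma$, then the vector converges jointly in law to $\mathcal N(0,\Sigma)$. Each hypothesis has been checked: the $j$-th coordinate lives in the $j$-th chaos by construction of $S_{N,j}$, converges to $\mathcal N(0,\sigma_j^2)$ by Proposition~\ref{pro:clt-vector-one-q}, and the covariance matrices converge to the diagonal matrix above. The limiting Gaussian vector with diagonal covariance is exactly a vector of independent centered Gaussians $(S_1,\dots,S_q)$ with variances $(\sigma_1^2,\dots,\sigma_q^2)$, which is the claim. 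One should note the degenerate case: if some $c_j=0$ (e.g.\ $j<m$ or $j$ simply absent from the Hermite expansion), then $S_{N,j}(f)\equiv 0$ and $\sigma_j^2=0$, and the statement still holds with that coordinate being the constant $0$; this causes no issue since the chaos CLT allows a degenerate limiting covariance.

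Alternatively, and perhaps more in the self-contained spirit of the paper, one can avoid citing the multivariate theorem and instead use the Cram\'er--Wold device directly: fix $\lambda=(\lambda_1,\dots,\lambda_q)\in\R^q$ and consider the linear combination $T_N=\sum_{j=1}^q \lambda_j S_{N,j}(f)$. This is a \emph{finite} sum of elements of distinct chaoses, hence $T_N=I\big(\sum_j \lambda_j s_{N,j}\big)$ in the sense of the sum decomposition, and by orthogonality $\Var(T_N)=\sum_j \lambda_j^2 \Var(S_{N,j}(f)) \to \sum_j \lambda_j^2 \sigma_j^2$. To get asymptotic normality of $T_N$ one can again appeal to the fourth moment theorem applied to the vector and linearity, or repeat the contraction estimate from the proof of Proposition~\ref{pro:clt-vector-one-q} for the combined kernel; since the cross terms involve contractions between $s_{N,p}$ and $s_{N,q}$ which are controlled by exactly the same $\rho$-summability bounds, the vanishing of all relevant contraction norms goes through verbatim. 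Then $T_N\to \mathcal N(0,\sum_j \lambda_j^2\sigma_j^2)$ for every $\lambda$, and since this is the characteristic function of $\sum_j \lambda_j S_j$ with the $S_j$ independent, Cram\'er--Wold gives the joint convergence.

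The main obstacle is essentially bookkeeping rather than a genuine difficulty: one must be careful that the ``sum'' $\sum_q S_{N,q}(f)$ used informally in the text is, at this stage, only a finite sum (indices $1,\dots,q$), so no convergence issue for the series arises here, and that the orthogonality $\E[I_p(g)I_q(h)]=0$ for $p\neq q$ is being applied to genuine $L^2(\Omega)$ random variables. If one takes the route of re-running the contraction estimates for the mixed kernel, the only new point to check is that contractions $s_{N,p}\otimes_r s_{N,q}$ with $p\neq q$ also have vanishing norm; this follows from the same splitting into a near-diagonal block (handled by H\"older and smallness of $\delta$) and a far block (handled by tail summability of $\rho$), with the exponents $p,q$ playing symmetric roles, so no essentially new estimate is needed. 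I would therefore present the short orthogonality-plus-multivariate-chaos-CLT argument as the proof, with a one-line remark that the contraction route also works.
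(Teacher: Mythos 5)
Your proposal is correct and matches the paper's approach: the paper proves this lemma by directly citing the Peccati--Tudor multivariate result (\cite[Theorem 1]{peccati_gaussian_2005}), which is exactly the ``componentwise chaos convergence plus convergence of the covariance matrix implies joint Gaussian convergence'' theorem you invoke, with the diagonal limiting covariance coming from orthogonality of distinct chaoses as you note. Your additional Cram\'er--Wold/contraction route and the remark on degenerate coordinates are fine but not needed.
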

\begin{proof}
    See, e.g., \cite[Theorem 1]{peccati_gaussian_2005}.
\end{proof}

\section{Proof of Theorem \ref{thm:main}}

\subsection{Convergence of observables}

\begin{lemma}
\label{lem:decomposition}
For $q\geq m$, the quantities $S_{N,q}=N^{-d/2} c_q \sum_{j\in B_N} H_q(X_j) f(j/N)$ defined in Equation \eqref{def:S_{N,q}} satisfy
\begin{equation*}
    \langle \Phi_N, f \rangle = \sum_{q=m}^\infty S_{N,q}(f).
\end{equation*}
As $N$ tends to infinity
\begin{equation*}
    \langle \Phi_N, f \rangle \to \mathcal{N}\left(0, \sum_{q=m}^\infty \sigma^2_q \int_D f^2(x) d x \right).
\end{equation*}
\end{lemma}
\begin{proof}
This result is standard and, to the authors' knowledge, appeared for the first time in \cite{hu_renormalized_2005}, see also the discussion in \cite[Theorem 11.8.3]{peccati_wiener_2011}.
We also observe that, $C_m$ defined in Theorem \ref{thm:main} is equal to $\sum_{q\geq m}\sigma^2_q$ with $\sigma_q$ defined in Equation \eqref{def:sigma^2_q}. We defer the proof of $C_m <\infty$ to Lemma \ref{lem:tightness}.
\end{proof}

\begin{lemma}
    For any finite collection of test functions $f_1,\dots,f_k\in \mathcal{H}^{\alpha}(D)$, it holds that 
    \begin{equation*}
    \bigl(\langle\Phi_N,f_i\rangle\bigr)_{i=1}^k 
        \;\xrightarrow[N\to\infty]{d}\;
        \mathcal N\bigl(0, \Sigma\bigr),
    \end{equation*}
    where the covariance matrix $\Sigma = (\Sigma_{ij})_{1\leq i,j\leq k}$ is given by
    \begin{equation*}
        \Sigma_{ij} = \sum_{q\geq m} \sigma^2_q \langle f_i,f_j\rangle_{L^2(\R^d)}.
    \end{equation*}
\end{lemma}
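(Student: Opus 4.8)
The plan is to reduce this multivariate central limit theorem to the one-dimensional statement already established in Lemma~\ref{lem:decomposition} by means of the Cram\'er--Wold device, and then to read off the limiting covariance from the bilinear structure of the variance computed in Proposition~\ref{pro:clt-vector-one-q}.

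First I would fix an arbitrary vector $(a_1,\dots,a_k)\in\R^k$ and set $g\overset{\text{def}}{=}\sum_{i=1}^k a_i f_i$. Since $\mathcal H^{\alpha}(D)$ is a linear space we still have $g\in\mathcal H^{\alpha}(D)$, and by linearity of the pairing $\sum_{i=1}^k a_i\langle\Phi_N,f_i\rangle=\langle\Phi_N,g\rangle$. Applying Lemma~\ref{lem:decomposition} to the single test function $g$ then gives, as $N\to\infty$,
\begin{equation}
\sum_{i=1}^k a_i\,\langle\Phi_N,f_i\rangle \;\xrightarrow{d}\; \mathcal N\Bigl(0,\;\sum_{q\ge m}\sigma_q^2(g)\Bigr),
\end{equation}
where, by Proposition~\ref{pro:clt-vector-one-q}, $\sigma_q^2(g)=q!\,c_q^2\bigl(\sum_{u\in\Z^d}\rho(u)^q\bigr)\int_D g^2$.

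Next I would compute the limiting variance explicitly. Denoting by $C_m$ the constant of Theorem~\ref{thm:main} (finite under the standing hypothesis $\sum_u|\rho(u)|^m<\infty$, since $|\rho(u)|\le1$ and $\sum_{q\ge m}q!\,c_q^2=\Var[H(X_o)]<\infty$) and expanding $\|g\|_{L^2(D)}^2=\sum_{i,j=1}^k a_i a_j\langle f_i,f_j\rangle_{L^2(D)}$, one gets
\begin{equation}
\sum_{q\ge m}\sigma_q^2(g)\;=\;C_m\,\|g\|_{L^2(D)}^2\;=\;\sum_{i,j=1}^k a_i a_j\,\Sigma_{ij},\qquad \Sigma_{ij}\overset{\text{def}}{=}C_m\,\langle f_i,f_j\rangle_{L^2(D)}.
\end{equation}
Thus, for every $(a_1,\dots,a_k)$, the scalar $\sum_i a_i\langle\Phi_N,f_i\rangle$ converges in law to the centered Gaussian with variance $\sum_{i,j}a_i a_j\Sigma_{ij}$; as $\Sigma$ is a Gram matrix it is positive semidefinite, so this is exactly the distribution of $\sum_i a_i Y_i$ for $Y=(Y_1,\dots,Y_k)\sim\mathcal N(0,\Sigma)$. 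By the Cram\'er--Wold theorem, $\bigl(\langle\Phi_N,f_i\rangle\bigr)_{i=1}^k\xrightarrow{d}\mathcal N(0,\Sigma)$, which is the assertion (using $\langle f_i,f_j\rangle_{L^2(\R^d)}=\langle f_i,f_j\rangle_{L^2(D)}$ since the $f_i$ are supported in $D$).

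The only point that is not completely routine is the identification of the off-diagonal entries $\Sigma_{ij}$ with $i\ne j$, since Lemma~\ref{lem:decomposition} only delivers the diagonal functional $g\mapsto\sum_{q\ge m}\sigma_q^2(g)$. This is handled as above precisely because that functional is a genuine quadratic form in $g$: the exact expression $\Var(S_{N,q}(f))=q!\,c_q^2 N^{-d}\sum_{u\in\Z^d}\rho(u)^q\sum_{j,k\in B_N:\,j-k=u}f(j/N)f(k/N)$ from Proposition~\ref{pro:clt-vector-one-q} is bilinear in $f$, and so is its limit $q!\,c_q^2\bigl(\sum_{u}\rho(u)^q\bigr)\langle f,f\rangle_{L^2(D)}$, so substituting $g=\sum_i a_i f_i$ and expanding automatically produces the cross terms $a_i a_j\langle f_i,f_j\rangle_{L^2(D)}$; equivalently one may appeal to the polarisation identity $\langle f_i,f_j\rangle_{L^2(D)}=\tfrac14\bigl(\|f_i+f_j\|_{L^2(D)}^2-\|f_i-f_j\|_{L^2(D)}^2\bigr)$ together with the already-known convergence of $\langle\Phi_N,f_i\pm f_j\rangle$. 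A route avoiding Cram\'er--Wold altogether would be to extend Lemma~\ref{lem:clt-vector-different-q} to the finite array $\bigl(S_{N,q}(f_i)\bigr)_{1\le i\le k,\;m\le q\le Q}$, obtaining independence across $q$ with within-block covariance $q!\,c_q^2\bigl(\sum_u\rho(u)^q\bigr)\langle f_i,f_j\rangle_{L^2(D)}$, and then letting $Q\to\infty$ with the variance-tail control implicit in Lemma~\ref{lem:decomposition}; no new estimate beyond those already proved is needed in either case.
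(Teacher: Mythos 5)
Your argument is correct and is essentially the proof the paper gives: both apply the Cram\'er--Wold device to $g=\sum_i a_i f_i$, invoke Lemma~\ref{lem:decomposition} together with Proposition~\ref{pro:clt-vector-one-q} and the asymptotic independence across chaoses, and recover the off-diagonal entries from the bilinearity of $f\mapsto\sigma_q^2(f)$. You also rightly replace the dangling $\sigma_q^2$ in the stated covariance by the summed constant $C_m$, which is what the statement must mean.
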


\begin{proof}
It suffices to test convergence of all linear combinations. Fix arbitrary coefficients $\alpha_1, \dots, \alpha_k\in \R$ and set
\begin{equation*}
    f=\sum_{i=1}^k \alpha_i f_i, \quad \langle \Phi_N, f \rangle = \sum_{i=1}^k \alpha_i \langle \Phi_N, f_i \rangle.
\end{equation*}
From Lemma \ref{lem:decomposition}, we have the decomposition $\langle \Phi_N, f \rangle= \sum_{q=m}^\infty S_{N,q}(f)$. For each $q\geq m$, $S_{N,q}(f)$ converges in distribution to a centered Gaussian variable with variance $\sigma^2_q \Vert f\Vert^2_{L^2(D)}$, recall Proposition \ref{pro:clt-vector-one-q}. Moreover, by Lemma \ref{lem:clt-vector-different-q}, the limits $(S_{N,q}(f))_q$ are asymptotically independent for different $q$. The proof is concluded by observing that $\sum_{q\geq m} \sigma^2_q(\sum_{i=1}^k \alpha_i f_i) = \alpha^{\intercal} \Sigma \alpha$.
\end{proof}

\subsection{Tightness}
The proof of Theorem \ref{thm:main} is concluded modulo the last lemma.

\begin{lemma}
\label{lem:tightness}
    For any $\alpha>d/2$, the sequence $\Phi_N$ satisfies
    \begin{equation*}
    \sup_N \E\left[ \bigl\Vert\Phi_N\Vert_{\mathcal{H}^{-\alpha}}^2 \right] <\infty.
    \end{equation*}
    In particular, $\Phi_N$ is tight in $\mathcal{H}^{-\alpha}(D)$ and $C_m=\sum_{q=m}^\infty q! c^2_q \sum_{u\in\Z^d} \rho(u)^q$ defined in Theorem \ref{thm:main} is finite.
\end{lemma}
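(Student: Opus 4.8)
The plan is to bound the second moment of the Sobolev norm by expanding in the eigenbasis and controlling $\E[\langle \Phi_N,\varphi_k\rangle^2]$ uniformly in $N$ and $k$. Concretely, since $\Vert \Phi_N\Vert_{\mathcal H^{-\alpha}}^2 = \sum_{k\ge 1}(1+\lambda_k)^{-\alpha}\langle \Phi_N,\varphi_k\rangle^2$ (with $\langle\Phi_N,\varphi_k\rangle$ understood as the $L^2$-pairing, using that $\varphi_k$ is smooth and bounded on $D$), taking expectations gives
\begin{equation}
\E\left[\Vert \Phi_N\Vert_{\mathcal H^{-\alpha}}^2\right] = \sum_{k\ge 1}(1+\lambda_k)^{-\alpha}\,\E\left[\langle \Phi_N,\varphi_k\rangle^2\right].
\end{equation}
By Weyl's law $\lambda_k \asymp k^{2/d}$, so $\sum_k (1+\lambda_k)^{-\alpha} \asymp \sum_k k^{-2\alpha/d}$ converges precisely when $\alpha > d/2$. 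Thus it suffices to show $\sup_N \sup_k \E[\langle \Phi_N,\varphi_k\rangle^2] < \infty$, i.e. a bound uniform in both $N$ and $k$.

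For the uniform variance bound, I would use the chaos decomposition $\langle \Phi_N,\varphi_k\rangle = \sum_{q\ge m} S_{N,q}(\varphi_k)$ together with orthogonality of distinct Wiener chaoses, so that
\begin{equation}
\E\left[\langle \Phi_N,\varphi_k\rangle^2\right] = \sum_{q\ge m}\Var(S_{N,q}(\varphi_k)).
\end{equation}
From the variance formula in Proposition~\ref{pro:clt-vector-one-q},
\begin{equation}
\Var(S_{N,q}(\varphi_k)) = q!\,c_q^2\,N^{-d}\sum_{u\in\Z^d}\rho(u)^q\sum_{\substack{j,l\in B_N\\ j-l=u}}\varphi_k\!\left(\tfrac jN\right)\varphi_k\!\left(\tfrac lN\right).
\end{equation}
Bounding crudely $|\varphi_k(j/N)\varphi_k(l/N)| \le \Vert\varphi_k\Vert_\infty^2$ and noting that for fixed $u$ the number of pairs $(j,l)\in B_N^2$ with $j-l=u$ is at most $|B_N|\le C N^d$, one gets $\Var(S_{N,q}(\varphi_k)) \le C\, q!\,c_q^2\,\Vert\varphi_k\Vert_\infty^2 \sum_{u\in\Z^d}|\rho(u)|^q \le C\, q!\,c_q^2\,\Vert\varphi_k\Vert_\infty^2 \sum_{u\in\Z^d}|\rho(u)|^m$, using $|\rho(u)|\le 1$ to pass from $q$ to $m$. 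Summing over $q\ge m$ gives
\begin{equation}
\E\left[\langle \Phi_N,\varphi_k\rangle^2\right] \le C\,\Vert\varphi_k\Vert_\infty^2\,\Big(\sum_{u\in\Z^d}|\rho(u)|^m\Big)\sum_{q\ge m} q!\,c_q^2,
\end{equation}
and $\sum_q q! c_q^2 = \Var(H(X_o)) < \infty$ by hypothesis.

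The main obstacle is that $\Vert\varphi_k\Vert_\infty$ is \emph{not} uniformly bounded in $k$ — on a $d$-dimensional box Dirichlet eigenfunctions satisfy only $\Vert\varphi_k\Vert_\infty \lesssim \lambda_k^{(d-1)/4}$ in general — so the naive bound above loses powers of $\lambda_k$. I see two ways around this. The cleaner one is to avoid the $L^\infty$ norm entirely: instead of bounding pointwise, keep the Riemann-sum structure and observe that $N^{-d}\sum_{j\in B_N}\varphi_k(j/N)^2$ is a Riemann sum for $\int_D \varphi_k^2 = 1$, so it is bounded by a constant uniformly in $k$ once $N$ is large (and for small $N$ one argues directly); more carefully, by Cauchy--Schwarz $N^{-d}\sum_{\substack{j,l\in B_N, j-l=u}}\varphi_k(j/N)\varphi_k(l/N) \le N^{-d}\sum_{j\in B_N}\varphi_k(j/N)^2$, and this last quantity is $O(1)$ uniformly in $k$ and $N$ because $\varphi_k^2$ has $L^1$ norm $1$ and one can control the discretization error using $\Vert\nabla(\varphi_k^2)\Vert_{L^1}\lesssim \lambda_k^{1/2}$ against the factor $N^{-1}$, or simply absorb the finitely many problematic $N$. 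This yields $\Var(S_{N,q}(\varphi_k)) \le C\, q!\, c_q^2 \sum_u |\rho(u)|^m$ with $C$ independent of $k$ and $N$, and the rest goes through as above. The alternative is to note that the whole argument only needs $\sum_k (1+\lambda_k)^{-\alpha}\Vert\varphi_k\Vert_\infty^2 < \infty$, which holds for $\alpha > d/2$ by the eigenfunction sup-bound combined with Weyl's law ($\sum_k \lambda_k^{-\alpha+(d-1)/2}$ converges for $\alpha > d/2 + (d-1)/2 \cdot (2/d)$, i.e. $\alpha>d-1/2$ — this is worse, so the first route is preferable). I would therefore carry out: (1) eigenbasis expansion and reduction to a uniform variance bound; (2) chaos orthogonality; (3) the Cauchy--Schwarz/Riemann-sum estimate giving $k$-uniformity; (4) summation over $q$ using finiteness of $\Var(H(X_o))$ and $|\rho|\le 1$; (5) Weyl's law to conclude the $k$-sum converges for $\alpha>d/2$; and (6) invoke tightness of Hilbert-space-valued random variables with uniformly bounded second moments together with compactness of the embedding $\mathcal H^{-\alpha'}\hookrightarrow\mathcal H^{-\alpha}$ for $\alpha' < \alpha$, both $>d/2$.
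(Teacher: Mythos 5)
Your overall strategy coincides with the paper's: expand $\Vert\Phi_N\Vert_{\mathcal H^{-\alpha}}^2$ in the eigenbasis, compute $\E\bigl[\langle\Phi_N,\varphi_k\rangle^2\bigr]$ via the Hermite/chaos orthogonality to get $N^{-d}\sum_{j,\ell}\varphi_k(j/N)\varphi_k(\ell/N)\sum_q q!c_q^2\rho(j-\ell)^q$, bound this uniformly, and conclude with Weyl's law, $|\rho|\le 1$, $\sum_q q!c_q^2=\Var(H(X_o))<\infty$, and compactness of $\mathcal H^{-\alpha'}\hookrightarrow\mathcal H^{-\alpha}$. The one place you diverge is in how you obtain $k$-uniformity, and there you have manufactured an obstacle that is not present and then proposed a fix that does not quite work. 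The obstacle: on $D=[-1/2,1/2]^d$ the Dirichlet eigenfunctions are explicit products of sines, so $\Vert\varphi_k\Vert_\infty\le 2^{d/2}$ \emph{uniformly in $k$}; the Sogge-type bound $\Vert\varphi_k\Vert_\infty\lesssim\lambda_k^{(d-1)/4}$ is the worst case for generic domains and is far from sharp for a rectangle. This uniform boundedness is exactly what the paper's Equation \eqref{eq:laplacian-property} encodes (the kernel $\sup_{x,y}\sum_k(1+\lambda_k)^{-\alpha}\varphi_k(x)\varphi_k(y)$ is finite for $\alpha>d/2$ precisely because $|\varphi_k(x)\varphi_k(y)|\le 2^d$ and $\sum_k(1+\lambda_k)^{-\alpha}<\infty$ by Weyl), so your ``naive'' route is in fact the paper's route and closes the proof immediately.

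The fix you propose instead has a real gap: the claim that $N^{-d}\sum_{j\in B_N}\varphi_k(j/N)^2$ is $O(1)$ uniformly in $k$ and $N$ because it is a Riemann sum for $\int_D\varphi_k^2=1$ fails when $k$ is large relative to $N$. The discretization error you invoke is of order $\Vert\nabla(\varphi_k^2)\Vert_{L^1}/N\lesssim\lambda_k^{1/2}/N$, which blows up as $k\to\infty$ at fixed $N$; and ``absorbing finitely many problematic $N$'' does not help since for every fixed $N$ there are infinitely many problematic $k$ (e.g.\ by aliasing, in $d=1$ with $k=N$ odd one gets $\varphi_k(j/N)^2=2$ at every lattice point, so the Riemann sum is $\approx 2$, not $1$). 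What actually saves the Cauchy--Schwarz bound is, once again, only the trivial estimate $\varphi_k^2\le 2^d$ pointwise. So: drop the detour, invoke uniform boundedness of the box eigenfunctions (or the paper's kernel bound \eqref{eq:laplacian-property}) directly, and the rest of your argument — including the careful tightness step (6), which the paper compresses into ``a standard application of Markov's inequality'' — is correct.
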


\begin{proof}
Let $\{\varphi_k\}_{k=1}^\infty$ denote an orthonormal basis defined in Subsection \ref{subsec:intro}.
Recall that the spectrum of $\Delta$ is discrete, $0\le \lambda_1\le\lambda_2\le\cdots$, and that $\lim_{k\to\infty}\lambda_k/k^{2/d}= c_d$ because of Weyl's asymptotics. 
Using $\alpha>d/2$ and the fact that $D$ is bounded, we have in particular that,
\begin{equation}
\label{eq:laplacian-property}
\sup_{x,y\in D} \sum_{k\ge1} (1+\lambda_k)^{-\alpha} \varphi_k\!\bigl(x)\varphi_k\!\bigl(y) < \infty.
\end{equation}
The squared norm of $\Phi_N$ is given by 
\begin{equation}
\label{eq:Hs-norm-sum}
  \|\Phi_N\|_{\mathcal{H}^{-\alpha}}^2
  \;=\;\sum_{k\ge1} (1+\lambda_k)^{-\alpha} \bigl|\langle\Phi_N,\varphi_k\rangle\bigr|^2,
\end{equation}
with $\langle\Phi_N,\varphi_k\rangle
  = N^{-d/2}\sum_{j\in B_N} H(X_j)\,\varphi_k\!\bigl(\tfrac jN\bigr)$.
Using the Hermite expansion $H=\sum_{q=m}^\infty c_q H_q$ and the orthogonality properties of $H_q$, we obtain
\begin{align}
  \mathbb E\bigl|\langle\Phi_N,\varphi_k\rangle\bigr|^2
  &= N^{-d}\sum_{j,\ell\in B_N} \varphi_k\!\bigl(\tfrac jN\bigr)\varphi_k\!\bigl(\tfrac \ell N\bigr)
      \,\mathbb E\bigl[H(X_j)H(X_\ell)\bigr] \notag\\
  &= N^{-d}\sum_{j,\ell\in B_N} \varphi_k\!\bigl(\tfrac jN\bigr)\varphi_k\!\bigl(\tfrac \ell N\bigr)
     \sum_{q=m}^\infty q!\,c_q^{\,2}\,\rho(j-\ell)^q. \label{eq:coeff-var}
\end{align}
We now insert \eqref{eq:coeff-var} into \eqref{eq:Hs-norm-sum} and take the expected value:
\begin{equation*}
\begin{split}
  \mathbb E\bigl[\|\Phi_N\|_{H^{-\alpha}}^2\bigr]
  &= \sum_{k\ge1} (1+\lambda_k)^{-\alpha}
       \mathbb E\bigl|\langle\Phi_N,\varphi_k\rangle\bigr|^2 \notag\\
  &= N^{-d}\sum_{j,\ell\in B_N} \Bigl(\sum_{k\ge1} (1+\lambda_k)^{-\alpha} \varphi_k\!\bigl(\tfrac jN\bigr)\varphi_k\!\bigl(\tfrac \ell N\bigr)\Bigr)
       \sum_{q=m}^\infty q!\,c_q^{\,2}\,\rho(j-\ell)^q\\
 &\leq C N^{-d}\sum_{j,\ell\in B_N} \sum_{q=m}^\infty q!\,c_q^{\,2}\,\rho(j-\ell)^q\\
 &\leq C  \sum_{q=m}^\infty q!\,c_q^{\,2} \sum_{j\in B_N}\rho(j)^q <\infty,
\end{split}
\end{equation*}
where in the third step we have used Equation \eqref{eq:laplacian-property}. In particular, $C_m<\infty$. The proof is concluded with a standard application of Markov's inequality.
\end{proof}

\section*{Funding}
F.C. was supported by the NWO (Dutch Research Organization) grant OCENW.KLEIN.083 and W.M.R. by the NWO (Dutch Research Organization) grants OCENW.KLEIN.083 and VI.Vidi.213.112.

\subsection*{Acknowledgments}

The authors would like to thank G. Peccati for pointing out the reference \cite{mcauley_limit_2025} which inspired this work and the anonymous referees for their valuable feedback on the first version of this work.

\bibliographystyle{alpha}
\bibliography{biblio}

\end{document}